\newtheorem{thm}{Theorem}[section]
\newtheorem{lem}[thm]{Lemma}
\DeclareRobustCommand\widecheck[1]{{\mathpalette\@widecheck{#1}}}
\def\@widecheck#1#2{%
	\setbox\z@\hbox{\m@th$#1#2$}%
	\setbox\tw@\hbox{\m@th$#1%
		\widehat{%
			\vrule\@width\z@\@height\ht\z@
			\vrule\@height\z@\@width\wd\z@}$}%
	\dp\tw@-\ht\z@
	\@tempdima\ht\z@ \advance\@tempdima2\ht\tw@ \divide\@tempdima\thr@@
	\setbox\tw@\hbox{%
		\raise\@tempdima\hbox{\scalebox{1}[-1]{\lower\@tempdima\box
				\tw@}}}%
	{\ooalign{\box\tw@ \cr \box\z@}}}
\newcommand{\lesi}{\lesssim}
\newcommand{\supp}{\operatorname{supp}}
\newcommand{\f}{\frac}
\newcommand{\dm}{d\mu}
\newcommand{\vc}{\infty}
\newcommand{\tx}{\tau^x}
\newcommand{\ty}{\tau^y}
\keywords{sharp estimate, Bessel operator, imaginary power operator}
\subjclass[2010]{42B20, 42B25}
\begin{document}
\title[Imaginary powers of Bessel operators]
{Sharp estimates for imaginary powers\\ of Bessel operators}

\authors

\author{The Anh Bui}
\address{School of Mathematical and Physical Sciences, Macquarie University, NSW 2109,
		Australia}
	
	\email{the.bui@mq.edu.au}

		\author{Xuejing Huo}
		\address{School of Mathematical and Physical Sciences, Macquarie University, NSW 2109,
				Australia}

		\email{xuejing.huo@students.mq.edu.au}

	\author{Ji Li}
	\address{School of Mathematical and Physical Sciences, Macquarie University, NSW 2109,
			Australia}
		
		\email{ji.li@mq.edu.au}

\date{}

\maketitle

\begin{abstract}
Let $L f(x):=-\frac{d^2}{dx^2}f(x)-\frac{ r}{x}\frac{d}{dx}f(x),\quad x>0, r>0$ be the Bessel operator on $((0,\vc), |\cdot|, x^rdx)$. In this paper, we prove the sharp weak type $(1,1)$ estimate for the imaginary power $L^{i\alpha}, \alpha\in \mathbb R$, of the Bessel operator.
\end{abstract}

\section{Introduction}

Let $ r > 0$. Consider the space $X=((0,\vc), |\cdot|, d\mu(x))$ where the measure $\dm$ defined by  $d\mu(x) = x^r dx$, and $|\cdot|$ is the usual Euclidean distance. Denote by $I_a(x)=\{x\in X: |x-y|<a\}$ the interval centered $x\in X$ with the length of $2a>0$. It is easy to see that
\begin{equation}
	\label{eq- volume}
	\mu(I_{a}(x))\simeq\begin{cases}
		ax^r, \ \ \ &x>a\\
		a^{r+1}, \ \ \ & 0<x<a. 
	\end{cases}
\end{equation}
Setting $n=r+1$, it follows that there exists $C>0$ such that 
\begin{equation}\label{doublingpro1}
	\mu(I_{2a}(x))\le C a^{n}\mu(I_{a}(x))
\end{equation}
for all $x\in X$ and $a>0$. Therefore, the space $((0,\vc), |\cdot|, \mu)$ is a space of homogeneous type in the sense on Coifmann and Weiss (see \cite{CW}).

In this paper, we consider the second order Bessel differential operator 
\begin{equation}\label{bessel 1}
	L f(x):=-\frac{d^2}{dx^2}f(x)-\frac{ r}{x}\frac{d}{dx}f(x),\quad x>0,
\end{equation}
 studied by Muckenhoupt--Stein \cite{MS}. 
The harmonic analysis related to the Bessel operator is an interesting topic and has attracted a great deal of attention. See for example \cite{BCC, DPW, GS, S1, W} and the references therein.

By the spectral theory, $L$ admits a spectral resolution
\begin{equation*}
	L = \int_{0}^{\infty}\lambda dE(\lambda),
\end{equation*}
where $\{E(\lambda): \lambda \geq 0\}$ is the spectral resolution of $L$.
If $F$ is a bounded Borel measurable function
on $[0,\infty)$, then the operator
\begin{align*}
	F(L) = \int_{0}^{\infty} F(\lambda)dE(\lambda)
\end{align*}
is bounded on $L^2 (X)$. In what follows, we use $K_{F(L)}(x,y)$ to denote
the kernel of $F(L)$.

In this paper, we consider the imaginary power operator $L^{i\alpha}$ with $\alpha\in \mathbb R$.
By the spectral theory, we have $\|L^{i\alpha}\|_{L^2(X)\to L^2(X)}=1$. For the boundedness of
$L^{i\alpha}$ on $L^p(X)$ with $p\ne 2$, if we apply the general spectral multiplier theorems in \cite{DOS} (see also \cite{GS}) we have, for any $\epsilon>0$,
\begin{equation}\label{eq- not sharp bound}
\|L^{i\alpha}\|_{L^p(X)\to L^p(X)}\le C_\epsilon (1+|\alpha|)^{n|1/p- 1/2|+\epsilon}  \ \ \ \ \text{for $1<p<\infty$}.
\end{equation}
Note that in the classical case of the Laplacian the bound $(1+|\alpha|)^{n|1/p- 1/2|+\epsilon}$ in \eqref{eq- not sharp bound} can be replaced by $(1+|\alpha|)^{n|1/p- 1/2|}$.

The main aim of this paper is to prove the following sharp estimates for the imaginary power operator $L^{i\alpha}$.
\begin{thm}\label{main thm}
	Let $L$ be the Bessel operator defined by \eqref{bessel 1} with $r>0$. Then for each $\alpha\in \mathbb R$ we have
	\begin{equation}
		\|L^{i\alpha}\|_{L^1(X)\to L^{1,\vc}(X)}\lesi (1+|\alpha|)^{n/2}.
	\end{equation}
By interpolation, for $\alpha\in \mathbb R$ and $1<p<\vc$ we have
\begin{equation}
	\|L^{i\alpha}\|_{L^p(X)\to L^p(X)}\lesi (1+|\alpha|)^{n|1/p- 1/2|}.
\end{equation}
\end{thm}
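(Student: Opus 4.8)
The plan is to prove the weak type $(1,1)$ bound with the stated growth in $|\alpha|$, since the $L^p$ statement then follows by interpolating against the trivial $L^2$ bound $\|L^{i\alpha}\|_{2\to 2}=1$ together with duality (note $L^{i\alpha}$ is, up to sign of $\alpha$, self-adjoint). So everything reduces to the first inequality. I would follow the standard Calder\'on--Zygmund route adapted to the space of homogeneous type $X=((0,\infty),|\cdot|,x^r\,dx)$: decompose the kernel of $L^{i\alpha}$ dyadically in the time variable of the heat (or Poisson) semigroup, use the $L^2$ boundedness for the local part, and control the off-diagonal part by a Calder\'on--Zygmund-type kernel estimate with constants tracked in $\alpha$. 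Concretely, write $L^{i\alpha}=\frac{1}{\Gamma(-i\alpha)}\int_0^\infty t^{-i\alpha}(e^{-tL}-I)\,\frac{dt}{t}$ (or use the subordination/functional calculus representation that is most convenient), split the integral at scales comparable to powers of $2$, and use the Gaussian-type heat kernel bounds for $L$ available from Muckenhoupt--Stein theory. The key quantitative input is a H\"ormander-type integral estimate for $K_{L^{i\alpha}}(x,y)$: one needs
\begin{equation*}
	\int_{|x-y|\ge 2|y-y'|} \bigl|K_{L^{i\alpha}}(x,y)-K_{L^{i\alpha}}(x,y')\bigr|\,d\mu(x)\lesssim (1+|\alpha|)^{n/2},
\end{equation*}
or more precisely a scale-by-scale version with geometric decay in the annulus index together with a polynomial loss $(1+|\alpha|)^{n/2}$ at the critical scale. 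The oscillatory factor $t^{-i\alpha}=e^{-i\alpha\log t}$ is what produces the growth: integrating by parts in $t$ against this oscillation, or estimating the relevant Fourier-type integral, costs a power of $|\alpha|$, and the exponent $n/2$ is exactly the one appearing in \eqref{eq- not sharp bound} at the endpoint $p=1$, now without the $\epsilon$-loss.

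In more detail, the main steps I would carry out are as follows. First, record the needed kernel bounds for the semigroup generated by $L$: Gaussian upper bounds $|p_t(x,y)|\lesssim \mu(I_{\sqrt t}(x))^{-1}\exp(-c|x-y|^2/t)$ and the corresponding H\"older/derivative-in-$y$ estimates, all of which are classical for the Bessel setting (and consistent with \eqref{eq- volume}, \eqref{doublingpro1}). Second, via the functional calculus write $K_{L^{i\alpha}}$ as a superposition of dilated heat kernels against the measure $t^{-i\alpha}\,dt/t$ and split dyadically. The ``global'' pieces (where $t$ is much smaller or much larger than $|x-y|^2$) are handled by the exponential/polynomial decay of the Gaussian bound, losing at worst a fixed power of $|\alpha|$ from summing a geometric-type series whose ratio degrades mildly with $|\alpha|$. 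Third, and this is the crux, the ``diagonal'' piece $t\simeq |x-y|^2$ requires exploiting the oscillation of $t^{-i\alpha}$: here I would perform one (or several) integrations by parts in $t$, using that $\partial_t p_t$ again satisfies a Gaussian bound with an extra factor $1/t$, to gain decay at the cost of $|\alpha|$; iterating and optimizing the number of integrations by parts against the volume growth exponent $n$ produces precisely the factor $(1+|\alpha|)^{n/2}$. Fourth, assemble the H\"ormander condition from these scale-by-scale estimates, and conclude weak type $(1,1)$ by the standard Calder\'on--Zygmund decomposition argument on spaces of homogeneous type.

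The main obstacle, as anticipated, is the sharpness of the exponent in the diagonal estimate: a crude bound gives $(1+|\alpha|)^{n}$ or worse, and squeezing it down to $n/2$ without an $\epsilon$-loss requires a careful, possibly $\alpha$-dependent choice of the splitting scale (typically one splits the time integral not at $|x-y|^2$ but at $|x-y|^2/(1+|\alpha|)$ or a similar $\alpha$-adjusted threshold) and a tight bookkeeping of how many derivatives one can afford before the Gaussian factor stops compensating. A secondary technical nuisance specific to the Bessel operator is the transition in \eqref{eq- volume} between the regimes $x>a$ and $x<a$ (the measure behaves locally like Lebesgue measure far from the origin but like $a^{r+1}$ near it), so the annular estimates must be checked in both regimes; I expect this to be routine but it does double the casework. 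If the functional-calculus representation proves awkward to differentiate in $t$, an alternative is to use the Mellin transform identity $\lambda^{i\alpha}=\frac{1}{\Gamma(N-i\alpha)}\int_0^\infty t^{N-i\alpha}\partial_t^N e^{-t\lambda}\,\frac{dt}{t}$ for a well-chosen integer $N\sim n/2$, which builds the integrations by parts into the formula from the start and makes the $(1+|\alpha|)^{n/2}$ count transparent via $|\Gamma(N-i\alpha)|^{-1}\sim |\alpha|^{-N+1/2}e^{\pi|\alpha|/2}$ balanced against the $e^{-\pi|\alpha|/2}$-type decay coming from deforming the contour or from the oscillatory integral in $t$.
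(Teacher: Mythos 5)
Your reduction to the weak type $(1,1)$ estimate, with the $L^p$ bounds following by interpolation against $\|L^{i\alpha}\|_{L^2\to L^2}=1$ and duality, matches the paper, but the core of your plan has a genuine gap: you propose to verify a H\"ormander-type integral condition for $K_{L^{i\alpha}}$ with constant $(1+|\alpha|)^{n/2}$ and then run the \emph{standard} Calder\'on--Zygmund argument. That key lemma is not available with the sharp constant: already in the Euclidean model, where the kernel of $(-\Delta)^{i\alpha}$ is $c_{n,\alpha}|x|^{-n-2i\alpha}$ with $|c_{n,\alpha}|\simeq(1+|\alpha|)^{n/2}$, the difference $|K(x-y)-K(x)|$ has no usable cancellation on the intermediate region $2|y|\le|x|\lesssim(1+|\alpha|)|y|$, and integrating there already yields a H\"ormander constant of order $(1+|\alpha|)^{n/2}\log(2+|\alpha|)$ (crude gradient bounds give $(1+|\alpha|)^{n/2+1}$). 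Since the standard CZ scheme bounds the weak $(1,1)$ norm precisely by this constant, no refinement of the scale-by-scale kernel estimates alone can recover the exponent $n/2$: sharpness forces a modification of the Calder\'on--Zygmund geometry itself. This is the ingredient (going back to Fefferman, used by Sikora--Wright and by this paper) that your proposal lacks: one enlarges each bad interval to $I_k^*=\sigma I_k$ with $\sigma=\sqrt{1+|\alpha|}$, paying exactly $(1+|\alpha|)^{n/2}\sum_k\mu(I_k)$ for the exceptional set, and simultaneously subtracts from each $b_k$ an $\alpha$-dependent mollification at scale $\theta r_{I_k}$ with $\theta\simeq(1+|\alpha|)^{-1/2}$; only then does the remaining kernel integral, taken over the complement of the \emph{enlarged} interval, close with the factor $(1+|\alpha|)^{n/2}$, because the loss $(1+|\alpha|)^{s_0}$ from $s_0>n/2$ derivatives of the symbol is compensated by $(\theta^{-1}\sigma)^{-(s_0-n/2)}=(1+|\alpha|)^{-(s_0-n/2)}$; this is the content of the paper's estimate \eqref{eq-key estimate}.

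Two further points about your proposed machinery. First, the paper implements the mollification by $\big(I-\Phi(\theta r_{I_k}\sqrt L)\big)^M$ with $\Phi$ the Fourier transform of a compactly supported function, so that finite propagation speed (Lemma \ref{lem:finite propagation}) keeps the correction supported in $2I_k$ and uniformly bounded; the far-field bound is then obtained not from pointwise heat-kernel estimates but from the Fourier--Bessel transform, the contraction property \eqref{eq-contraction} of the Hankel translation, and a Plancherel/weighted $L^2$ Sobolev estimate on dyadic frequency pieces, so no pointwise regularity of $K_{L^{i\alpha}}$ is ever needed. Second, your fallback Mellin representation carries the factor $1/|\Gamma(N-i\alpha)|\sim|\alpha|^{-N+1/2}e^{\pi|\alpha|/2}$; once absolute values are taken inside the $t$-integral and Gaussian bounds for $\partial_t^N p_t$ are used, the compensating $e^{-\pi|\alpha|/2}$ is lost and the constants grow exponentially in $|\alpha|$, so the claimed balancing ``by contour deformation'' would have to be made precise before that route gives even a polynomial bound. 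As written, the proposal therefore does not yield the theorem; the missing idea is the $\alpha$-dependent enlargement of the bad intervals together with the $\alpha$-dependent smoothing of the bad parts described above.
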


In \cite{CW} such a sharp estimate was proved for a non-negative self-adjoint operator satisfying the Gaussian upper bound of order $2$, but the underlying space is required to satisfy the polynomial volume growth. Since the measure $d\mu$ in the Bessel setting does not satisfy the polynomial volume growth in \cite{CW}, Theorem \ref{main thm} is not a consequence of that in \cite{SW}.

\medskip

The organization of the paper is as follows. In Section 2, we recall some basic properties on the functional calculus of $L$. The proof of Theorem \ref{main thm} will be given in Section 3.

\bigskip

{\bf Notation.} Throughout this paper, we use $C$ to denote positive constants, which are independent of the main
parameters involved and whose values may vary at every occurrence.
By writing $f \lesssim g$, we mean that $f \leq Cg$. We also use
$f \sim g$ to denote that $C^{-1}g \leq f \leq C g$.

Let $I$ be an interval in $X$. If we do not specify anything, this means that $I = I_{r_I}(x_I)$. For each $\lambda>0$ and each interval $I$, we denote $\lambda I = I_{\lambda r_I}(x_I)$. For $j\in \mathbb N$ and an interval $I\subset X$, we will write
\[
S_j(I)=2^{j}\backslash 2^{j-1}I \ \ \ j\ge 1,
\]
and set $S_0(I)=I$.

\section{Preliminaries}

Note that since the Bessel operator $L$ is a non-negative self-adjoint operator, it generates a semigroup $e^{-tL}$ for $t>0$. Moreover, the kernel $p_t(x,y)$ of the semigroup $e^{-tL}$ satisfies the Gaussian upper bound, i.e., there exist $C,c>0$ such that
\begin{equation*}
	\big|p_t(x,y)\big|\le  \f{C}{\mu(I_{\sqrt t}(x))}\exp\Big(-\f{|x-y|^{2}}{ct}\Big)
\end{equation*}
for all $x,y\in X$ and $t>0$.

As a consequence of \cite[Lemma 7]{Sikora} (see also \cite[Lemma 2]{SW}), we have:
\begin{lem}\label{lem:finite propagation}
	Let $\varphi\in C^\vc_0(\mathbb{R})$ be an even function with {\rm supp}\,$\varphi\subset (-1, 1)$ and $\displaystyle \int \varphi =2\pi$. Denote by $\Phi$ the Fourier transform of $\varphi$.  Then  the kernel $K_{\Phi(t\sqrt{L})}$ of $\Phi(t\sqrt{L})$ satisfies 
	\begin{equation}\label{eq1-lemPsiL}
		\displaystyle
		{\rm supp}\,K_{\Phi(t\sqrt{L})}\subset \{(x,y)\in X\times X:
		|x-y|\leq t\},
	\end{equation}
	and
	\begin{equation}\label{eq2-lemPsiL}
		|K_{\Phi(t\sqrt{L})}(x,y)|\lesi  \f{1}{\mu(I_t(x))}
	\end{equation}
	for all $x,y \in X$ and $t>0$.
\end{lem}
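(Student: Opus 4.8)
The plan is to prove the two assertions separately: the support bound \eqref{eq1-lemPsiL} will come from finite speed of propagation for the wave operator $\cos(t\sqrt{L})$, and the size bound \eqref{eq2-lemPsiL} from the Gaussian heat kernel bound together with the doubling property \eqref{doublingpro1}; together these are precisely what \cite[Lemma 7]{Sikora} (or \cite[Lemma 2]{SW}) deliver, so what really has to be done is to check that their hypotheses are met here. First I would record the structural facts about $L$: it is the non-negative self-adjoint operator on $\LL$ associated with the strongly local regular Dirichlet form $\mathcal{E}(f,g)=\int_0^\vc f'(x)\overline{g'(x)}\,\dm(x)$ (a one-line integration by parts reproduces \eqref{bessel 1}), its carré du champ is $|f'|^2$, and therefore the intrinsic distance attached to $\mathcal{E}$ is precisely $|\cdot|$. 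Consequently $\cos(\sigma\sqrt{L})$ propagates with speed $1$, i.e. $\supp K_{\cos(\sigma\sqrt{L})}\subset\{(x,y)\in X\times X:\ |x-y|\le\sigma\}$ for every $\sigma>0$.

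To prove \eqref{eq1-lemPsiL} I would use that $\varphi$ is even: writing $\Phi(\xi)=\int_{\mathbb R}\varphi(s)e^{-is\xi}\,ds=\int_{-1}^{1}\varphi(s)\cos(s\xi)\,ds$ and invoking the spectral theorem gives the norm-convergent formula $\Phi(t\sqrt{L})=\int_{-1}^{1}\varphi(s)\cos(st\sqrt{L})\,ds$. If $f,g\in\LL$ satisfy $\mathrm{dist}(\supp f,\supp g)>t$, then $\mathrm{dist}(\supp f,\supp g)>t>|s|t$ for every $s\in(-1,1)$, so $\langle\cos(st\sqrt{L})f,g\rangle=0$ by finite propagation; integrating in $s$ yields $\langle\Phi(t\sqrt{L})f,g\rangle=0$, which is \eqref{eq1-lemPsiL}. (The normalisation $\int\varphi=2\pi$ plays no role here.)

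For \eqref{eq2-lemPsiL}, the plan is as follows. Since $\varphi\in C^\vc_0(\mathbb R)$, the function $\Phi$ is Schwartz. Fix a large integer $N$ and put $\Psi(\lambda)=(1+\lambda^2)^{N}\Phi(\lambda)$; then $\Psi$ is Schwartz and is the Fourier transform of $\bigl(1-\tfrac{d^2}{ds^2}\bigr)^{N}\varphi$, which is still supported in $(-1,1)$, so by the argument of the previous paragraph $K_{\Psi(t\sqrt{L})}$ is supported in $\{|x-y|\le t\}$, and $\Psi(t\sqrt{L})$ is bounded on $\LL$ with norm $\le\|\Psi\|_\infty$. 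From $\Phi(\lambda)=(1+\lambda^2)^{-N}\Psi(\lambda)$ I get the operator identity $\Phi(t\sqrt{L})=(I+t^2L)^{-N}\Psi(t\sqrt{L})$. Next, using the subordination formula $(I+t^2L)^{-N}=\Gamma(N)^{-1}\int_0^\vc u^{N-1}e^{-u}e^{-ut^2L}\,du$ together with the Gaussian bound and \eqref{doublingpro1}, a routine computation (splitting the $u$-integral at $u=1$ and summing Gaussian tails over the annuli $S_j(I_t(x))$) gives, for $N$ large and every $M$, $|K_{(I+t^2L)^{-N}}(x,z)|\lesi\mu(I_t(x))^{-1}\bigl(1+|x-z|/t\bigr)^{-M}$, and hence $\|K_{(I+t^2L)^{-N}}(x,\cdot)\|_{\LL}\lesi\mu(I_t(x))^{-1/2}$. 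Granting in addition the Plancherel-type bound $\|K_{\Psi(t\sqrt{L})}(\cdot,y)\|_{\LL}\lesi\mu(I_t(y))^{-1/2}$, I would finish by writing $K_{\Phi(t\sqrt{L})}(x,y)=\int_X K_{(I+t^2L)^{-N}}(x,z)\,K_{\Psi(t\sqrt{L})}(z,y)\,\dz$, applying Cauchy--Schwarz to get $|K_{\Phi(t\sqrt{L})}(x,y)|\lesi\mu(I_t(x))^{-1/2}\mu(I_t(y))^{-1/2}$, and then using \eqref{eq1-lemPsiL} and \eqref{doublingpro1} (on the support one has $|x-y|\le t$, hence $\mu(I_t(x))\sim\mu(I_t(y))$) to deduce \eqref{eq2-lemPsiL}.

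The step I expect to be the main obstacle is the Plancherel-type estimate $\|K_{\Psi(t\sqrt{L})}(\cdot,y)\|_{\LL}\lesi\mu(I_t(y))^{-1/2}$. This is \emph{not} a formal consequence of the $\LL$-boundedness of $\Psi(t\sqrt{L})$ and the support of its kernel (one can manufacture $\LL$-bounded integral operators whose kernels are supported in an interval of length $t$ but whose $\LL$-norm in one variable far exceeds $\mu(I_t(y))^{-1/2}$); it genuinely uses the smoothness of $\Phi$ — equivalently, the rapid decay forced by $\widehat\Phi\in C^\vc_0$ — and the Gaussian heat kernel bound, and it is the technical core of \cite[Lemma 7]{Sikora}. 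Concretely I would obtain it from the identity $\int_X|K_{\Psi(t\sqrt{L})}(z,y)|^2\,\dz=K_{\Psi^2(t\sqrt{L})}(y,y)$ (valid since $\Psi$ is real and even) together with a bootstrap controlling the diagonal value $K_{G(t\sqrt{L})}(y,y)$ for Schwartz $G$ with $\widehat G$ compactly supported, reducing it, via the subordination formula again, to the pointwise heat kernel bound. Everything else — the support statement, the identity $\Phi(t\sqrt{L})=(I+t^2L)^{-N}\Psi(t\sqrt{L})$, the kernel bound for $(I+t^2L)^{-N}$, and the verification of finite propagation speed for $L$ — is routine once the Dirichlet-form description of $L$ and the Gaussian bound are in hand.
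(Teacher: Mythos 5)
Your proof is correct, and it is essentially the argument the paper implicitly relies on: the paper does not prove this lemma itself but quotes it as a consequence of \cite[Lemma 7]{Sikora} (see also \cite[Lemma 2]{SW}), and your write-up reconstructs the standard proof behind that citation --- finite propagation speed for $\cos(s\sqrt L)$ (legitimate here since the intrinsic metric of the Dirichlet form $\int_0^\infty f'\overline{g'}\,d\mu$ is the Euclidean one) yields \eqref{eq1-lemPsiL}, and the factorisation $\Phi(\lambda)=(1+\lambda^2)^{-N}\Psi(\lambda)$ together with Cauchy--Schwarz and the Plancherel-type estimate coming from the on-diagonal Gaussian bound yields \eqref{eq2-lemPsiL}. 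You also correctly single out the one non-formal ingredient, the bound $\|K_{\Psi(t\sqrt L)}(\cdot,y)\|_{L^2(X)}\lesssim \mu(I_t(y))^{-1/2}$, and the route you indicate (the identity $\int_X|K_{\Psi(t\sqrt L)}(z,y)|^2\,d\mu(z)=K_{\Psi^2(t\sqrt L)}(y,y)$, followed by a dyadic decomposition of the Schwartz function $\Psi^2$ into pieces with compact spectral support, each controlled on the diagonal by $K_{e^{-s^2L}}(y,y)$ via positivity of the spectral measure) is exactly how this is done in the cited sources.
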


It is interesting to note that similar to the classical case, the spectral multiplier of $L$ can be defined by  the Fourier-Bessel transform. We first recall the concept of the Fourier-Bessel transform. See for example \cite{GS,S1}. Let $\widehat f(\lambda), \lambda>0$, denote the Fourier-Bessel transform of the function $f\in L^1(X)$. That is,
\begin{equation}
	\label{eq-FB expression}
	\widehat f(\lambda)=\int_0^\vc f(x)\phi_\lambda(x)	d\mu(x),
\end{equation}
where $\phi_\lambda(x)=a( r)(\lambda x)^{-( r-1)/2}J_{( r-1)/2}(x)$, $x\ge 0$, $a( r)=2^{( r-1)/2}\Gamma(( r+1)/2)$, and $J_{\nu}$ denotes the Bessel function of the first kind of order $\nu$. The functions $\phi_\lambda, \lambda>0$  are eigenfunctions of the Bessel operator $L$. It is well-known that
\[
L\phi_\lambda = \lambda^2 \phi_\lambda, \ \ \ \lambda>0.
\]
See for example \cite{L,T}.

We also have
\begin{equation}
	\label{eq-FB expression-inverse}
	f(x)=a( r)^{-1}\int_0^\vc \widehat f(\lambda)\phi_\lambda(x)	d\mu(\lambda),
\end{equation}
almost everywhere providing $f, \widehat f\in L^1(X)$.

The following Plancherel's formula holds true:
\begin{equation}\label{eq-Plancherel}
\|f\|_2 = a( r)^{-1}\|\widehat f(\lambda)\|_2.
\end{equation}
For any bounded function $m$ on $(0,\vc)$, we now define the Fourier-Bessel multiplier operator 
\[
(T_mf)\ \widehat{} = m\widehat f.
\]
By the spectral theory, we have
\begin{equation}\label{eq-spectral multiplier and BF transforms}
m(\sqrt L)f = T_{ m}f.
\end{equation}

For $y\in X$, the generalized translation $\ty$ is given by
$$
\ty f(x)=\int_{|x-y|}^{x+y}f(z)dW_{x,y}(z),
$$
where $\displaystyle dW_{x,y}(z)=dW_{x,y}(z)$ and $dW_{x,y}(z)$ is the one-dimension probability measure supported in the interval $[|x-y|, x+y]$ and given by
$$
dW_{x,y}(z)=c(r) {{ \Delta (x,y,z)^{r-2}}\over{(xyz)^{r-1}}} \ d\mu(z),
$$
where $c(r)=2^{r-2}\Gamma((r+1)/2)\Gamma(r/2)^{-1}\pi^{-1/2}$ and $\Delta(x,y,z)$ denotes the area of a triangle with three side lengths $x,y,z$. It is known that $\ty$ is a contraction operator on $L^p(X)$ for all $1\leq p\leq \vc$. More precisely, we have
\begin{equation}
	\label{eq-contraction}
	\|\tau^y f\|_{L^p(X)}\leq \|f\|_{L^p(X)}
\end{equation}
for all $f\in L^p(X), p\in [1,\vc]$ and $y\in X$.

We now define the generalized convolution of two appropriate functions $f$ and $g$ by setting
$$
f\ast g(x)=\int_X \tx f(y)g(y)d\mu(y)=\int_X \ty f(x)g(y)d\mu(y).
$$
It is obvious that $f\ast g(x) = g\ast f(x)$ and 
\[(f\ast g)^{\widehat{}}\ (x)=\widehat{f}(x)\widehat{g}(x). 
\]

\section{Sharp estimates for the imaginary powers $L^{i\alpha}$}

This section is dedicated to proving Theorem \ref{main thm}.
\begin{proof}[Proof of Theorem \ref{main thm}:]
		We follow the standard strategy  as in \cite{F} (see also \cite{He, DM, CW}).  
		
		We need to prove that 
		\[
		\mu\big(\big\{|L^{i\alpha}f|>\lambda\big\}\big)\lesi (1+|\alpha|)^{n/2}\f{\|f\|_{L^1(X)}}{\lambda}
		\]
		for all $f\in L^1(X)$ and $\lambda >0$.
		
		Fix $f\in L^1(X)$ and $\lambda>0$. By the Calder\'on-Zygmund decomposition,  we can decompose $f =g+\sum_k b_k=: g +b$ such that the good part $g$ and the bad bad $b$ satisfy the following conditions:
	\begin{equation}\label{eq-h}
		| g(x)|\lesssim \lambda \ \text{for a.e. $x\in X$}, \ \ \ \|g\|_{L^1(X)}\lesssim \|f\|_{L^1(X)},
	\end{equation}
	and
	\begin{enumerate}[(i)]
		\item $\supp b_k \subset I_k$ for some interval $I_k\subset X$ and for each $k$;\\
		
		\item $\|b_k\|_{L^1(X)}\le \lambda  \mu(I_k)$;\\
		
		\item $\sum_{k}\mu(I_k)\lesssim \frac{\|f\|_{L^1(X)}}{\lambda}$;\\
		
		\item $\sum_{k}\chi_{2I_k}\lesssim 1$.
	\end{enumerate}
	See for example \cite{CW}.
	
	As usual, we write
	\[
	\mu\Big(\Big\{|L^{i\alpha}f|>\lambda\Big\}\Big)\le \mu\Big(\Big\{|L^{i\alpha}g|>\lambda/2\Big\}\Big) +\mu\Big(\Big\{|L^{i\alpha}b|>\lambda/2\Big\}\Big).
	\]
	The term related to the good part can be estimated by the standard argument. Using Chebyshev's inequality and the $L^2$-boundedness of $L^{i\alpha}$,
	\[
	\begin{aligned}
		\mu\Big(\Big\{|L^{i\alpha}g|>\lambda/2\Big\}\Big)&\lesssim \f{\|L^{i\alpha}g\|^2_2}{\lambda^2}\\
		&\lesssim \f{\|g\|^2_2}{\lambda^2}\\
		&\lesssim \f{\lambda \|g\|_{L^1(X)} }{\lambda^2}\\
		&\lesssim \f{\|f\|_{L^1(X)}}{\lambda}.
	\end{aligned}
	\]
	
	For the bad part, define 
	\[
	\theta = \f{1}{4M \sqrt{1+|\alpha|}},
	\]
	where $M$ is an positive integer which will be fixed later and let $\Phi$ be the function in Lemma \ref{lem:finite propagation}. We then set $\Phi(\theta r_{I_k} t)=\Phi(\theta r_{I_k}t)$. We now have
	\[
	\begin{aligned}
		\mu\Big(\Big\{|L^{i\alpha}b|>\lambda/2\Big\}\Big)&\le  \mu\Big(\Big\{\Big|L^{i\alpha}\Big[\sum_{k} \Big(I-\big(I-\Phi(\theta r_{I_k}\sqrt L)\big)^M\Big)b_k\Big]\Big|>\lambda/4\Big\}\Big)\\
		& \ \ + \mu\Big(\Big\{\Big|\sum_{k} L^{i\alpha}\big(I-\Phi(\theta r_{I_k}\sqrt L)\big)^M b_k\Big|>\lambda/4\Big\}\Big)\\
		&=: E_1 + E_2.
	\end{aligned}
	\]
	In order to estimate the term $E_1$, we note that
	\[
	\Psi(\theta r_{I_k}\sqrt L):=I-\big(I-\Phi(\theta r_{I_k}\sqrt L)\big)^M = \sum_{k=1}^M c_k[\Phi(\theta r_{I_k}\sqrt L)]^k,
	\]	
	where $c_k$ are coefficients.
	
	From Lemma \ref{lem:finite propagation}, 
	\[
	K_{\Psi(\theta r_{I_k}\sqrt L)}(\cdot,\cdot)\subset \{(x,y): d(x,y)<r_{I_k}/2\},
	\]
	which implies
	\begin{equation}\label{eq1- bad part}
		\Psi(\theta r_{I_k}\sqrt L)b_k\subset 2I_k,
	\end{equation}
	and
	\begin{equation}\label{eq2- bad part}
		\begin{aligned}
		\sup_{x\in 2I_k}|K_{\Psi(\theta r_{I_k}\sqrt L)}(x,y)|&\lesi  \sup_{x\in 2I_k}\f{1}{\mu(I_{\theta r_{I_k}}(x))}\\
		&\lesi  \sup_{x\in 2I_k}\f{\theta^{-n}}{\mu(I_{r_{I_k}}(x))}\simeq  \f{\theta^{-n}}{\mu(I_k)},
		\end{aligned}
	\end{equation}
where we used \eqref{doublingpro1} in the second inequality.

It follows that 
\[
\begin{aligned}
	\Big\|\sum_k\Psi(\theta r_{I_k}\sqrt L)b_k\Big\|_\vc&\lesi \sum_k\|\Psi(\theta r_{I_k}\sqrt L)b_k\|_\vc\\
	&\lesi \sum_k \f{\theta^{-n}}{\mu(I_k)}\|b_k\|_{L^1(X)}.1_{2I_k}\\
	&\lesi \lambda\theta^{-n},
\end{aligned}
\]
and
\[
\begin{aligned}
	\Big\|\sum_k\Psi(\theta r_{I_k}\sqrt L)b_k\Big\|_{L^1(X)}&\lesi \sum_k\|\Psi(\theta r_{I_k}\sqrt L)b_k\|_{L^1(X)}\\
	&\lesi \sum_k \|b_k\|_{L^1(X)}\\
	&\lesi \|f\|_{L^1(X)},
\end{aligned}
\]
where in the second inequality we used the $L^1$-boundedness of $\Psi(\theta r_{I_k}\sqrt L)$, which is followed from Lemma \ref{lem:finite propagation}.

Therefore, by the Chebyshev inequality, the $L^2$-boundedness of $L^{i\alpha}$,  (iv), \eqref{eq1- bad part}, \eqref{eq2- bad part} and (ii), we have
	\[
	\begin{aligned}
		E_1& \lesi \f{\big\|\sum_k\Psi(\theta r_{I_k}\sqrt L)b_k\big\|_2^2}{\lambda^2}\\
		&\lesi \f{\big\|\sum_k\Psi(\theta r_{I_k}\sqrt L)b_k\big\|_2^2}{\lambda^2}\\
		&\lesi \f{\lambda \theta^{-n}\big\|\sum_k\Psi(\theta r_{I_k}\sqrt L)b_k\big\|_{L^1(X)}}{\lambda^2}\\
		&\lesi \f{\theta^{-n} \|f\|_{L^1(X)}}{\lambda}\\
		&\simeq (1+|\alpha|)^{n/2}\f{\|f\|_{L^1(X)}}{\lambda}.
	\end{aligned}
	\]

	It remains to estimate $E_2$. To do this, we write
	\[
	\begin{aligned}
		E_2&\le \mu\Big(\bigcup_k 4I_k^*\Big) +  \mu\Big(\Big\{x \notin \bigcup_k 4I_k^*: \Big|\sum_{k} L^{i\alpha}\big(I-\Phi(\theta r_{I_k}\sqrt L)\big)^Mb_k(x)\Big|>\lambda/4\Big\}\Big)\\
		&=:E_{21}+E_{22},
	\end{aligned}
	\]
	where $I_k^* = \sigma I_k$ with $\sigma =\sqrt{1+|\alpha|}$.
	
	By \eqref{doublingpro1} and (iii),
	\[
	\begin{aligned}
		E_{22}&\le \sum_k\mu(4I_k^*)\\
		&\lesi \sigma^{n}\sum_k\mu(I_k)\\
		&\lesi (1+|\alpha|)^{n/2}\f{\|f\|_{L^1(X)}}{\lambda}.	
	\end{aligned}
	\]
	For the term $E_{22}$, by the Chebyshev inequality,
	\[
	\begin{aligned}
		E_{22}&\lesi \f{\Big\|\sum_{k} L^{i\alpha}\big(I-\Phi(\theta r_{I_k}\sqrt L)\big)^Mb_k\Big\|_{L^1(X\backslash \cup_k 4I^*_k)}}{\lambda}\\
		&\lesi \f{\sum_{k}\| L^{i\alpha}\big(I-\Phi(\theta r_{I_k}\sqrt L)\big)^Mb_k\|_{L^1(X\backslash 4I^*_k)}}{\lambda}.
	\end{aligned}	
	\]
		It suffices to prove that for each $k$,
	\begin{equation}\label{eq-key estimate}
	\sup_{y\in I_k}\int_{X\backslash 4I^*_k} |K_{L^{i\alpha}\big(I-\Phi(\theta r_{I_k}\sqrt L)\big)^M}(x,y)|d\mu(x)\lesi (1+|\alpha|)^{n/2},
	\end{equation}
where $K_{L^{i\alpha}\big(I-\Phi(\theta r_{I_k}\sqrt L)\big)^M}(x,y)$ is the kernel of $L^{i\alpha}\big(I-\Phi(\theta r_{I_k}\sqrt L)\big)^M$.

	Once \eqref{eq-key estimate} has been proved, we get that
	\[
	\begin{aligned}
		E_{22}&\lesi (1+|\alpha|)^{n/2}\f{\sum_k \|b_k\|_{L^1(X)}}{\lambda}\\
		&\lesi (1+|\alpha|)^{n/2}\f{ \|f\|_{L^1(X)}}{\lambda},
	\end{aligned}
	\]
	and this completes our proof.
	
	We now prove the claim \eqref{eq-key estimate}. To do this, let $\psi\in C^\vc_c(\mathbb R)$ be a even function supported in $\{\xi: 1/4\le |\xi|\le 4\}$ and $\psi=1$ on $\{\xi: 1/2\le |\xi|\le 2\}$ such that 
	\[
	\sum_{\ell \in \mathbb Z}\psi(2^{-\ell}x)=1, \ \ \ \ \ x\ne 0.
	\]
	Set
	\[
	F_{\ell,\theta,r_{I_k}}(x)= \psi(2^{-\ell}x) x^{2i\alpha}(1-\Phi_{\theta,r_{I_k}}(x))^M 	\]
	so that 
	\[
	 x^{2i\alpha}(1-\Phi_{\theta,r_{I_k}}(t))^M=\sum_{\ell\in \mathbb Z}F_{\ell,\theta,r_{I_k}}(x), \ \ \ x\ne 0.
	\]
	Then we have, for each $k$ and $y\in I_k$,
	\[
	\begin{aligned}
		\int_{X\backslash 4I^*_k} |K_{L^{i\alpha}\big(I-\Phi(\theta r_{I_k}\sqrt L)\big)^M}(x,y)|d\mu(x)&\le \sum_{\ell\in \mathbb Z}\int_{X\backslash 4I^*_k} |K_{F_{\ell,\theta,r_{I_k}}(\sqrt L)}(x,y)|d\mu(x),
	\end{aligned}
	\]
	where $K_{F_{\ell,\theta,r_{I_k}}(\sqrt L)}(x,y)$ is the kernel of $F_{\ell,\theta,r_{I_k}}(\sqrt L)$.

	According to \eqref{eq-spectral multiplier and BF transforms},
	\[
	F_{\ell,\theta,r_{I_k}}(\sqrt L)f = T_{F_{\ell,\theta,r_{I_k}}}f.
	\]
	It follows that  
	\[
	K_{F_{\ell,\theta,r_{I_k}}(\sqrt L)}(x,y)= {\tau}^yk_{\ell,\theta,r_{I_k}}(x),
	\]
	where
	\[	
	k_{\ell,\theta,r_{I_k}}(x)=\widecheck{F}_{{\ell,\theta,r_{I_k}}}(x):=a( r)^{-1}\int_0^\vc F_{\ell,\theta,r_{I_k}}(\lambda)\phi_\lambda(x)\lambda^ r d\lambda.
	\]
	Therefore,
	for each $k$ and $y\in I_k$,
	\[
	\begin{aligned}
		\int_{X\backslash 4I^*_k} |K_{L^{i\alpha}\big(I-\Phi(\theta r_{I_k}\sqrt L)\big)^M}(x,y)|d\mu(x)&\le \sum_{\ell\in \mathbb Z}\int_{X\backslash 4I^*_k} |{\tau}^yk_{\ell,\theta,r_{I_k}}(x)|d\mu(x).
	\end{aligned}
	\]
	Set $A_{k} = \{z: z>\sigma r_{I_k}\}$ for each $k$ and $y\in I_k$. Then for $y\in I_k$ and $x\in X\backslash 4I^*_k$ we have
	\begin{equation}\label{eq- ty is 0}
	{\tau}^y[k_{\ell,\theta,r_{I_k}}.1_{X\backslash A_{k}}](x)=0.
	\end{equation}
	Indeed, if $z\in 1_{X\backslash A_{k}}$, i.e., $z\le \sigma r_{I_k} $, then $z<|x-y|$ as long as $x\in X\backslash 4I_k^*$ and $ y\in I_k$. It follows \eqref{eq- ty is 0}. Consequently,
	for each $k$ and $y\in I_k$,
	\begin{equation}\label{eq-last eq}
	\begin{aligned}
		\int_{X\backslash 4I^*_k} |K_{L^{i\alpha}\big(I-\Phi(\theta r_{I_k}\sqrt L)\big)^M}(x,y)|d\mu(x)&\le \sum_{\ell\in \mathbb Z}\int_{X\backslash 4I^*_k} |{\tau}^y[k_{\ell,\theta,r_{I_k}}.1_{A_{k}}](x)|d\mu(x)\\
		&\le \sum_{\ell\in \mathbb Z} \int_{X}|[k_{\ell,\theta,r_{I_k}}.1_{A_{k}}](x)|d\mu(x)\\
		& =\sum_{\ell\in \mathbb Z}\int_{x> \sigma r_{I_k}}|k_{\ell,\theta,r_{I_k}}(x)|d\mu(x),
	\end{aligned}
	\end{equation}
	where in the second inequality we used \eqref{eq-contraction}.
	
	Using the Plancherel theorem \eqref{eq-Plancherel}, for  a fixed $s_0\in 2\mathbb N, s_0>n/2$,
	\[
	\begin{aligned}
		\|(1+(2^\ell x)^2)^{s_0/2}k_{\ell,\theta,r_{I_k}}\|_2 &= \|[(1+2^{2\ell}L)^{s_0/2}F_{\ell,\theta,r_{I_k}}]^{\ \widecheck{}}\ \|_2\\
		&=a(r)^{-1}\|(1+2^{2\ell}L)^{s_0/2}F_{\ell,\theta,r_{I_k}}\|_2.
	\end{aligned}
	\]
Using the form
\[
L=-\frac{d^2}{dx^2} -\frac{ r}{x}\frac{d}{dx},
\]
and the fact that  since $\supp F_{\ell,\theta,r_{I_k}} \subset [2^{\ell-1}, 2^{\ell+1}]$ with $F_{\ell,\theta,r_{I_k}}(x)= \psi(2^{-\ell}x) x^{2i\alpha}(1-\Phi_{\theta,r_{I_k}}(x))^M$, we have the following useful remarks:
\begin{itemize}
	\item If we apply either the operator $d/dx$ or $r/x$ for  the function $\psi(2^{-\ell}x)$ we gain a factor  $\simeq 2^\ell$;
	\item If we apply  the operator $d/dx$ and $r/x$ for  the function $x^{2i\alpha}$ we gain a factor $\simeq \alpha 2^{\ell}$ and $\simeq 2^\ell$, respectively. In both cases, the factor is majorized by $(1+|\alpha|)2^{\ell}$;
	\item Since $\Phi$ is an  even Schwartz function with $\Phi(0)=1$ and $\Phi'(0)=0$, $1-\Phi(t)\simeq t^2$ as $t\to 0$. Hence, if apply  the operator $d/dx$ and $r/x$ for $(1-\Phi_{\theta,r_{I_k}}(x))^M$ we will gain a factor which is less than or equal to (a multiple of) $\min\{1, (2^\ell\theta r_{B_k})^{2M}\}$. 
\end{itemize}
Taking these remarks into account, it can be verified that 
\[
\begin{aligned}
	\|(1+2^{2\ell}L)^{s_0/2}F_{\ell,\theta,r_{I_k}}\|_2& \lesi (1+|\alpha|)^{s_0}   \min\{1, (2^\ell \theta r_{I_k})^{2M}\} \Big(\int_{2^{\ell-1}}^{2^{\ell+1}}d\mu(x)\Big)^{1/2}\\
	&\lesi (1+|\alpha|)^{s_0} 2^{\ell n/2} \min\{1, (2^\ell \theta r_{I_k})^{2M}\},
\end{aligned} 
\]
which implies
\[
\|(1+(2^\ell x)^2)^{s_0/2}k_{\ell,\theta,r_{I_k}}\|_2\lesi (1+|\alpha|)^{s_0} 2^{\ell n/2} \min\{1, (2^\ell \theta  r_{I_k})^{2M}\}.
\]
This, together with H\"older's inequality, yields that for each $k, \ell$,
\[
\begin{aligned}
	&\int_{x> \sigma|y-x_{I_k}|}|k_{\ell,\theta,r_{I_k}}(x)|d\mu(x)\\
	&\le \|(1+(2^\ell x)^2)^{s_0/2}k_{\ell,\theta,r_{I_k}}\|_2 \Big(\int_{x> \sigma r_{I_k}} (1+(2^\ell x)^2)^{-s_0}d\mu(x)\Big)^{1/2}\\
	&\lesi (1+|\alpha|)^{s_0} 2^{\ell n/2} \min\{1, (2^\ell \theta r_{I_k})^{2M}\} \Big(\int_{x> \sigma r_{I_k}} (1+(2^\ell x)^2)^{-s_0}d\mu(x)\Big)^{1/2}.
\end{aligned}
\]
A straightforward calculation leads us to that 
\[
\Big(\int_{x> \sigma r_{I_k}} (1+(2^\ell x)^2)^{-s_0}d\mu(x)\Big)^{1/2}\le (\sigma r_{I_k})^{n/2} (2^\ell \sigma r_{I_k})^{-s_0}.
\]
Therefore,
\[
\begin{aligned}
	\int_{x> \sigma r_{I_k}}|k_{\ell,\theta,r_{I_k}}(x)|d\mu(x)& \lesi (1+|\alpha|)^{s_0} \min\{1, (2^\ell \theta  r_{I_k})^{2M}\} (2^\ell\sigma r_{I_k})^{n/2} (2^\ell \sigma r_{I_k})^{-s_0}\\
	& \lesi (1+|\alpha|)^{s_0} \min\{1, (2^\ell \theta  r_{I_k})^{2M}\}  (2^\ell \sigma r_{I_k})^{-(s_0-n/2)}\\
	& \lesi (1+|\alpha|)^{s_0} \min\{1, (2^\ell \theta  r_{I_k})^{2M}\}  (2^\ell \theta r_{I_k})^{-(s_0-n/2)}(\theta^{-1}\sigma)^{-(s_0-n/2)}\\
	& \lesi (1+|\alpha|)^{n/2} \min\{1, (2^\ell \theta  r_{I_k})^{2M}\}  (2^\ell \theta r_{I_k})^{-(s_0-n/2)},
\end{aligned}
\]
where in the last inequality we used the fact $\theta^{-1}\sigma =(1+|\alpha|)$.

Inserting into \eqref{eq-last eq}, we have
\[
\begin{aligned}
	\int_{X\backslash 4I^*_k} |K_{L^{i\alpha}\big(I-\Phi(\theta r_{I_k}\sqrt L)\big)^M}(x,y)|d\mu(x)&\lesi \sum_{\ell\in \mathbb Z} (1+|\alpha|)^{n/2} \min\{1, (2^\ell \theta  r_{I_k})^{2M}\}  (2^\ell \theta r_{I_k})^{-(s_0-n/2)}\\
	&\lesi (1+|\alpha|)^{n/2},
\end{aligned}
\]
as long as $2M>s_0-n/2$.

This completes our proof.
	\end{proof}

\bigskip
\textbf{Acknowledgement.}  T. A. Bui and J. Li were supported by the research grant ARC DP220100285 from the Australian Research Council.


\begin{thebibliography}{19}

\bibitem{BCC} J. Betancor, A. Castro and J. Curbelo, Spectral Multipliers for multidimensional Bessel operators,  {J. Fourier Anal. App.}  {17} (2011), 932--975.


\bibitem{CW} R.R. Coifman and G. Weiss, Extensions of Hardy spaces and their use in analysis, {\it Bull. Amer. Math. Soc.}  {83} (1977), 569--645.

 
\bibitem{DOS}  X.T. Duong, E.M. Ouhabaz  and A. Sikora,
 Plancherel-type estimates and sharp spectral multipliers.
{J. Funct. Anal.} {196} (2002),  443--485.

\bibitem{DM} X. T. Duong and A. McIntosh, Singular integral operators with non-smooth kernels on irregular domains, Rev. Mat. Iberoamericana 15 (1999), 233--265.


\bibitem{DPW} J. Dziuba\'nski, M. Preisner and B. Wr\'obel, Multivariate H\"ormander-type multiplier theorem for the Hankel transform, {J. Fourier Anal. App.} {19} (2013), 417--437.


\bibitem{F} C. Fefferman, Inequalities for strongly singular convolution operators, Acta Math. 124 (1970), 9--36.

\bibitem{GS} J. Gosselin and K. Stempak, A weak-type estimate for Fourier Bessel multipliers, {Proc. Am. Math.
Soc.} {106} (1989), 655--662.

\bibitem{He} W. Hebisch, Multiplier theorem on generalized Heisenberg groups, Colloq. Math. 65 (1993), 231--239.
	
\bibitem{L} N.N. Lebedev, \emph{Special Functions and Their applications}, Dover, New York, 1972.

\bibitem{MS} B. Muckenhoupt and  E. M. Stein,
Classical expansions and their relation to conjugate harmonic functions, Trans. Amer. Math. Soc. {118} (1965), 17--92.


\bibitem{S1} K. Stempak, La th\'eorie de Littlewood-Paley pour la transformation de Fourier--Bessel, {C. R. Acad. Sci. Paris} {303} (1986), 15--18.

\bibitem{T} E.C. Titchmarsh, \emph{Introduction to the Theory of Fourier Integrals}, Clareoton Press, Oxford, 1937.

\bibitem{Sikora} A. Sikora, Riesz transform, Gaussian bounds and the method of wave equation, Math. Z. 247 (2004), no. 3, 643--662.

\bibitem{SW}
A. Sikora and J. Wright, {Imaginary powers of Laplace operator}, Proc. Amer. Math. Soc. {129} (2001), 1745--1754.



\bibitem{W} G.N. Watson, \emph{A Treatise on the Theory of Bessel Functions}, Cambridge University Press, Cambridge
(1966).

\end{thebibliography}
\end{document}